\newtheoremstyle{dotless}{}{}{\itshape}{}{\bfseries}{}{ }{}
\theoremstyle{dotless}
\newtheorem{theorem}{Theorem}[section] %numbering divided by section
\newtheorem{proposition}[theorem]{Proposition}
\newtheorem{lemma}[theorem]{Lemma}
\newtheorem{observation}[theorem]{Observation}
\newtheorem*{theoremsp1}{Theorem \ref{thm:turan}}%for main theorem in intro
\newcommand{\FL}[1]{\left\lfloor #1 \right\rfloor}
\newcommand{\CL}[1]{\left\lceil #1 \right\rceil}
\renewcommand{\l}{\ell}
\newcommand{\mc}[1]{\mathcal{#1}}
\newcommand{\maxcr}{\mathrm{max-\overline{cr}}}
\DeclareMathOperator{\ex}{ex}
\title{Weighted Tur\'an Problems with Applications}
\author{ {\Large Patrick Bennett}\thanks{\url{patrick.bennett@wmich.edu}, supported in part by Simons Foundation Grant \#426894.} \\Department of Mathematics \\ Western Michigan University \and {\Large Sean English}\thanks{\url{Sean.English@ryerson.ca}} \\Department of Mathematics \\ Ryerson University \and  \\{\Large Maria Talanda-Fisher}\thanks{\url{maria.talanda-fisher@wmich.edu}} \\Department of Mathematics \\ Western Michigan University} 
\begin{document}
	\maketitle
	
	\begin{abstract}
	Suppose the edges of $K_n$ are assigned weights  by a weight function $w$. We define the {\em weighted extremal number}
	\[
	\ex(n,w,F):=\max\{w(G)\mid G\subseteq K_n,\text{ and }G\text{ is }F\text{-free}\}\] 
	where $w(G):=\sum_{e\in E(G)}w(e)$. In this paper we study this problem for two types of weights $w$, each of which has an application. The first application is to an extremal problem in a complete multipartite host graph. The second application is to the maximum rectilinear crossing number of trees of diameter 4. 
	\end{abstract}

	\section{Introduction}

	The first known result in extremal graph theory is Mantel's Theorem, \cite{M07}, which states that the maximum number of edges in a triangle-free graph on $n$ vertices is realized by the balanced complete bipartite graph $K_{\lceil{n/2}\rceil,\lfloor{n/2}\rfloor}$. This was later generalized to graphs that contain no copy of $K_\ell$ by Tur\'an, \cite{T41}. Since then, the study of such problems has been at the forefront of extremal graph theory.
	
	We say a graph $G$ is $F$-free if $G$ does not contain a subgraph isomorphic to $F$. The \emph{extremal number}, or \emph{Tur\'an number} of a graph $F$, denoted $\ex(n,F)$, is the maximum number of edges over all $n$-vertex graphs that are $F$-free. The \emph{Tur\'an graph}, $T_{n,\ell}$, is the balanced (i.e. partite sets differ in size by at most one) complete $\ell$-partite graph on $n$ vertices. This allows us to state the result by Tur\'an as follows:
	
	\begin{theorem}[Tur\'an's Theorem,\cite{T41}]
		For all $n\geq\ell\geq 3$, we have
		\[
		\ex(n,K_\ell)=|E(T_{n,\ell-1})|.
		\]
	\end{theorem}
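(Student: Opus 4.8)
The plan is to prove Turán's Theorem via the Zykov symmetrization argument, which I find cleanest. Let $G$ be a $K_\ell$-free graph on $n$ vertices with the maximum possible number of edges, so $|E(G)| = \ex(n, K_\ell)$. I will show that $G$ can be transformed into a complete multipartite graph without decreasing the edge count, and then argue that among complete $(\ell-1)$-partite graphs on $n$ vertices, the balanced one (i.e.\ $T_{n,\ell-1}$) is optimal.

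First I would establish the key structural claim: \emph{in an extremal graph, non-adjacency is an equivalence relation on the vertex set}. The only nontrivial property to check is transitivity. Suppose toward a contradiction that $u \not\sim v$ and $v \not\sim x$ but $u \sim x$ (where $\sim$ denotes adjacency). Consider the degrees $d(u), d(v), d(x)$. If $d(v) < d(u)$, then deleting $v$ and replacing it with a clone of $u$ (a new vertex having exactly $u$'s neighborhood) yields a $K_\ell$-free graph with strictly more edges, a contradiction; similarly if $d(v) < d(x)$. The remaining case is $d(v) \geq d(u)$ and $d(v) \geq d(x)$: here I delete both $u$ and $x$ and replace them with two clones of $v$. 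One must check this operation preserves $K_\ell$-freeness (cloning a vertex never creates a larger clique, since the clones form an independent set) and strictly increases the edge count (the gain $2d(v)$ exceeds the loss $d(u)+d(x)+1$, the $+1$ accounting for the edge $ux$). Either way we contradict maximality, so transitivity holds.

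With non-adjacency an equivalence relation, the complement of $G$ is a disjoint union of cliques, which means $G$ itself is a complete multipartite graph whose parts are the equivalence classes. Since $G$ is $K_\ell$-free, it has at most $\ell-1$ parts. Finally I would show that among all complete multipartite graphs on $n$ vertices with at most $\ell-1$ parts, the edge count is maximized by the balanced complete $(\ell-1)$-partite graph $T_{n,\ell-1}$: using exactly $\ell-1$ nonempty parts is never worse than using fewer (splitting a part only adds edges), and among complete $(\ell-1)$-partite graphs a simple convexity/exchange argument shows that making the part sizes as equal as possible maximizes the number of crossing edges. This yields $\ex(n, K_\ell) = |E(T_{n,\ell-1})|$.

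The main obstacle is the symmetrization step, specifically verifying the inequality in the case $d(v) \geq d(u), d(x)$ while simultaneously confirming that replacing $u, x$ by clones of $v$ cannot introduce a copy of $K_\ell$. One must be careful that the two new clones of $v$ are non-adjacent to each other (and inherit $v$'s non-edge to the original $v$, if $v$ is kept), so no clique can use more than one vertex from any clone-class; this is precisely what guarantees the cloning operation respects $K_\ell$-freeness. The rest of the argument — the equivalence-relation bookkeeping and the balancing computation — is routine once this lemma is in hand.
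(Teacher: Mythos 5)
Your approach---Zykov symmetrization---is exactly the technique the paper itself relies on. The paper never proves Tur\'an's theorem directly (it cites \cite{T41}), but its Proposition \ref{prop:prod} proves a weighted generalization by precisely this vertex-duplication argument, and Tur\'an's theorem is the special case where every vertex has weight $1$; the paper even remarks that Zykov's technique yields an elegant proof of Tur\'an's theorem. So structurally you and the paper agree: use duplication to force the extremal graph to be complete multipartite (equivalently, no three vertices spanning exactly one edge), then balance the parts.

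There is, however, one concrete error in your main case. When you delete the adjacent vertices $u$ and $x$ and replace them by two clones of $v$, the number of edges lost is $d(u)+d(x)-1$, not $d(u)+d(x)+1$: the edge $ux$ is counted twice in $d(u)+d(x)$, so it must be subtracted once rather than added. This matters, because your stated inequality ``the gain $2d(v)$ exceeds the loss $d(u)+d(x)+1$'' does not follow from $d(v)\geq d(u)$ and $d(v)\geq d(x)$; it fails, for instance, when $d(u)=d(v)=d(x)$, which is exactly the situation that survives your first two cases, and there your argument as written produces no contradiction. With the correct count the step closes: the net change is
\[
2d(v)-\bigl(d(u)+d(x)-1\bigr)\;\geq\; 1\;>\;0,
\]
contradicting maximality and establishing transitivity of non-adjacency. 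This is precisely the computation in the paper's proof of Proposition \ref{prop:prod}, where the loss is $W(y)\cdot W(N(y))+W(z)\cdot W(N(z))-W(y)W(z)$---the weight of the edge between the two replaced vertices is \emph{subtracted}---and the strict gain comes exactly from that subtracted term. The rest of your outline (clones are pairwise nonadjacent, so duplication preserves $K_\ell$-freeness; an extremal graph is complete multipartite with at most $\ell-1$ parts; an exchange argument shows balanced part sizes are optimal) is correct and standard.
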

	
	Extremal numbers have been a main topic of interest in extremal graph theory since the topic's inception. Many variants on the classical extremal number question have been studied. One variation involves changing the host graph from $K_n$ to some other graph $G$. More precisely, the extremal number $\ex(H,F)$ is the largest number of edges over all $F$-free subgraphs of $H$. Note, $\ex(K_n,F)=\ex(n,F)$.
	
	This variation originated with Zarankiewicz, who was interested in the case where the host graph is bipartite, specifically $\ex(K_{n,n},K_{s,s})$, \cite{Z51}. More recently, De Silva, Heysse, Kapilow, Schenfisch and Young determined the numbers $\ex(K_{a_1,a_2,\dots,a_\ell},s K_\ell)$, forbidding a union of disjoint cliques in a complete multi-partite host graph, \cite{DY}. The authors suggest an open problem; determining $\ex(K_{k_1,k_2,\dots,k_r},s K_\ell)$, where the number of partite sets in the host graph $k\geq r$, the size of the forbidden cliques. Towards this end, we determine these numbers exactly in the case $s=1$. For a set of indices $P\subseteq [r]$, let $k_P=\sum_{i\in P} k_i$.
	
	\begin{theoremsp1}
	The extremal number $\ex(K_{k_1, k_2, \ldots, k_r}, K_\l)$ is equal to 
	\[
	\max_{\mc{P} } \sum_{\substack{P, P' \in \mc{P} \\ P \neq P'}}k_{P}\cdot k_{P'}
	\]
	where the max is over all partitions $\mc{P}$ of $[r]$ into $\l-1$ sets.
	\end{theoremsp1}
	
	In order to prove Theorem \ref{thm:turan}, we reduced the problem  to a weighted version of Tur\'ans theorem, where the vertices of the host graph receive weights given by vertex-weight function $W$, and then the product-edge-weight function $w_\Pi$ is given by the product of the weights of the endpoints. The extremal number $\ex(n,w_\Pi, F)$ is the maximum sum of edge-weights over all $F$-free graphs on $n$ vertices with edge-weights given by $w_\Pi$. The numbers
	$\ex(n,w_\Pi,K_\ell)$ are determined exactly for all non-negative vertex-weight functions $W$ in Section \ref{section prodedgeweight}.
	
	In addition to the product edge-weighting, we explore the min-edge-weight function $w_{min}$, where given a vertex-weight function $w$, the edge-weight function, $w_{min}$, is given by the minimum weight of the two incident vertices. In section \ref{section minedgeweight}, the numbers $\ex(n,w_{min},K_\ell)$ are determined exactly for any vertex-weight function $w$.
	
	As an application of the min-edge-weight function extremal numbers, we explore a variation of crossing numbers. The most common question about crossing number explores the minimum number of crossings over all planar drawings of a non-planar graph $G$. Under some natural assumptions, one can also explore the maximum number of crossings possible over all planar drawings of a graph $G$. This is known as the \emph{maximum crossing number}. Here we explore \emph{maximum rectilinear crossing numbers} of $G$, denoted $\maxcr(G)$, where the edges are required to be drawn as straight line segments between their incident vertices.
	
	Recently, Fallon, Hogenson, Keough, Lomel\'i Schaefer and Sober\'on determined a lower bound on the maximum rectilinear crossing number of spiders (trees with a single vertex of degree $\geq 3$). In section \ref{section spiders}, we provide a matching upper bound. Finally, using min-edge-weight extremal numbers, we determine the maximum rectilinear crossing number of trees of diameter $4$ in section \ref{section diameter 4}.

	\section{Weighted Tur\'an}	
	
	Let $w:E(K_n)\to [0,\infty)$ be a nonnegative-valued weight function of the edges of $K_n$. For any $G \subseteq K_n$ let $w(G):=\sum_{e\in E(G)}w(e)$. Given a graph $F$, we define the {\em weighted extremal number}
	\[
	\ex(n,w,F):=\max\{w(G)\mid G\subseteq K_n,\text{ and }G\text{ is }F\text{-free}\}.\] 
Of course if $w$ is the constant function $1$ then the we just get the standard extremal number $\ex(n, F)$.

	In this paper we consider two types of weight functions $w$, both of which can be described as {\em vertex-induced edge weightings}, in that they naturally arise from a weight function on the vertices. Let $W:V(K_n)\to [0,\infty) $ be a vertex weighting. Let the min-edge-weight $w_{\min}:E(K_n)\to [0,\infty)$ be given by $w_{\min}(uv)=\min\{W(u),W(v)\}$. Similarly, let the product-edge-weight $w_\Pi:E(K_n)\to[0,\infty)$ be given by $w_\Pi(uv)=W(u)W(v)$. In this paper we concern ourselves with $\ex(n,w_{min},K_\ell)$ and $\ex(n,w_\Pi,K_\ell)$.

	\subsection{Product-edge-weight}\label{section prodedgeweight}
	
 For a set of vertices $S$, let $W(S):= \sum_{v \in S} W(v)$.

	\begin{proposition}\label{prop:prod}
		The extremal number $\ex(n,w_\Pi,K_\ell)$ is equal to 
\[
\max_{\mc{P}}\sum_{\substack{P, P' \in \mc{P} \\ P \neq P'}} W(P) W(P')
\]
where the maximum is taken over all partitions $\mc{P}$ of $V(K_n)$ into $\l-1$ parts. 
	\end{proposition}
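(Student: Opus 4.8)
The plan is to prove this proposition by establishing matching upper and lower bounds, mirroring the structure of the classical proof of Turán's theorem but tracking vertex weights throughout.

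For the lower bound, given any partition $\mc{P}$ of $V(K_n)$ into $\l-1$ parts, I would take the complete $(\l-1)$-partite graph $G$ whose parts are exactly the parts of $\mc{P}$. This graph is $K_\l$-free since a $K_\l$ would require $\l$ mutually adjacent vertices, but any $\l$ vertices must include two in a common part, which are nonadjacent. Its product-edge-weight is precisely $\sum_{P,P'\in\mc{P},\,P\neq P'}W(P)W(P')$ (with each unordered pair of parts contributing $W(P)W(P')$, since summing the products $W(u)W(v)$ over cross-pairs factors through the part-sums). Taking the best such partition gives $\ex(n,w_\Pi,K_\l)\ge\max_{\mc{P}}\sum W(P)W(P')$.

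For the upper bound, I would show that some optimal $K_\l$-free graph $G$ can be taken to be complete $(\l-1)$-partite, i.e. a "blow-up" of a structure that induces a partition of the vertices. The cleanest route is a Zykov-style symmetrization: take a $K_\l$-free graph $G$ of maximum weight. First argue that non-adjacency can be made an equivalence relation — if $u,v$ are nonadjacent I would compare the weighted "contributions" of $u$ and $v$ (the sum of $w_\Pi$-weights of edges incident to each) and replace the lower-contribution vertex's neighborhood by a copy of the other's, which keeps the graph $K_\l$-free and does not decrease total weight. Iterating, the nonadjacency classes partition $V(K_n)$ and $G$ becomes complete multipartite; since $G$ is $K_\l$-free it has at most $\l-1$ parts, and empty parts can be allowed so we may assume exactly $\l-1$ parts. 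Its weight is then $\sum_{P\neq P'}W(P)W(P')$ for the induced partition $\mc{P}$, which is at most the maximum over all partitions.

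**The main obstacle** I anticipate is making the symmetrization step fully rigorous when weights are involved, since the standard unweighted argument compares vertex degrees but here I must compare weighted contributions, and I need to verify the replacement operation both preserves $K_\l$-freeness and never strictly decreases the weight. In particular, one must check that equalizing two nonadjacent vertices' neighborhoods does not accidentally create a $K_\l$ through a third vertex, and handle the possibility of ties and of zero-weight vertices carefully. An alternative to symmetrization would be to invoke Proposition~\ref{prop:prod}'s analogue of the Motzkin–Straus / probabilistic weighting argument, but I expect the symmetrization approach to be the most self-contained, with the verification that weight is nondecreasing being the delicate point to write out carefully.
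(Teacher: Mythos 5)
Your overall strategy coincides with the paper's: a lower bound via complete $(\ell-1)$-partite graphs, and a Zykov-style symmetrization for the upper bound (the paper cites Zykov explicitly for exactly this). The lower-bound half of your argument is complete and correct. The genuine gap is in the step you yourself flag as the obstacle: ties. Your plan handles a nonadjacent pair $u,v$ with unequal weighted contributions by overwriting the neighborhood of the weaker vertex, but when $W(N_G(u)) = W(N_G(v))$ this operation gives no strict increase, so extremality yields no contradiction, and ``iterating'' pairwise replacements is not guaranteed to terminate in a complete multipartite graph: each tied replacement preserves extremality, so you may wander among extremal graphs indefinitely, and nothing in your sketch rules out cycling. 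Worse, after your first step you may assume \emph{every} nonadjacent pair in the extremal graph has neighborhoods of exactly equal weight, so the tied case is not a boundary nuisance --- it is the entire remaining problem.

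The missing idea, which is how the paper resolves it, is a \emph{double} duplication. First discard zero-weight vertices (they cannot affect $\ex(n,w_\Pi,K_\ell)$), so all weights are strictly positive. If the extremal $G$ is not complete multipartite, there are vertices $x,y,z$ with $yz \in E(G)$ and $xy,xz \notin E(G)$, and by the pairwise step $W(N_G(x)) = W(N_G(y)) = W(N_G(z))$. Now duplicate $x$ twice, replacing both $y$ and $z$: the weight lost is $W(y)W(N_G(y)) + W(z)W(N_G(z)) - W(y)W(z)$ (the edge $yz$ is counted twice in the two sums), while the weight gained is $W(y)W(N_G(x)) + W(z)W(N_G(x))$, so the net change is $+W(y)W(z) > 0$, contradicting extremality. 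This step --- recovering the weight of the internal edge $yz$, which is precisely where strict positivity of the weights is used --- is what your proposal needs to close the argument; with it added, your proof becomes essentially identical to the paper's.
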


Note that the expression inside the ``max" is precisely the weight of the complete $(\l-1)$-partite graph with partition $\mc{P}$. Thus, the lemma is equivalent to saying that there exists a complete $(\l-1)$-partite graph that is extremal. Here we prove the stronger result that if all vertex weights are strictly positive, then actually any extremal example must be complete-$(\ell-1)$-partite.

\begin{proof}
We will use a proof technique first introduced by Zykov \cite{Zykov} to prove a generalization of Tur\'an's theorem. This technique gives a particularly elegant proof of Tur\'an's theorem which is explained very clearly in \cite{book}.

 Let $x, y \in V$ be nonadjacent in a graph $G$. If we form the graph $G'$ by deleting $y$ and then adding a new vertex $x'$ of weight $W(y)$ such that $N_{G'}(x') = N_G(x)$. We call this operation {\it duplicating the vertex $x$ to replace $y$} and we call the new vertex $x'$ a {\it duplicate} of $x$ since they have the same neighborhood. Note that if $G$ is $K_\l$-free then so is $G'$. 

Suppose that $G$ has maximum possible weight among $K_\l$-free graphs. Vertices of weight $0$ cannot affect the value of $\ex(n,w_\Pi,K_\ell)$, so we will assume every vertex has strictly positive weight. Note that if $W(N_G(x)) > W(N_G(y))$, then duplicating $x$ to replace $y$ with $x'$, where $x'$ receives the same weight that $y$ had, yields a new graph $G'$ whose weight is more than that of $G$. Thus we may assume that any pair of nonadjacent vertices in $G$ have neighborhoods of the same weight. 

Recall that a graph is complete multipartite if and only if it does not have any three vertices with exactly one edge among them. Suppose for the sake of contradiction that $G$ has three vertices $x, y, z$ such that $yz \in E(G)$ but $xy, xz \notin E(G)$. Then we must have $ W(N_G(x)) = W(N_G(y)) = W(N_G(z))$.

Now we replace $y$ and $z$ with duplicates of $x$. We will show that the new graph $G'$ has a larger weight than $G$, which will give us a contradiction. Indeed, when we lose the edges incident to $y$ and $z$, we lose a total weight of 
\[
W(y)\cdot W(N(y)) + W(z) \cdot W(N(z)) - W(y)W(z),
\] 
and then when we add the vertices $x'$ (whose weight will be $W(y)$) and $x''$ (of weight $W(z)$) we gain a total of
\[
W(y)\cdot W(N(x)) + W(z) \cdot W(N(x))
\]
which is more than what we lost. Thus $G$ is not extremal, a contradiction, so $G$ must be complete multipartite. Then $G$ must be complete $(\ell-1)$-partite, since if $G$ had $\l$ nonempty parts then $G$ would contain a copy of $K_{\ell}$, and with $k\leq \ell-2$, $G$ could not be extremal. This completes the proof.

\end{proof}

	\subsection{Min-edge-weight}\label{section minedgeweight}

	In this section we address the edge-weight function $w_{min}$, induced by the vertex-weight function $W$, given by $w_{min}(xy) = \min\{W(x), W(y)\}$. We define the following notation.  Assume (WLOG) that $W(v_n) \le \ldots \le W(v_1)$. Let $\mathcal{F}(n, W ,t)$ be the family of $K_\l$-free graphs on vertex set $\{v_1,\dots,v_n\}$ with vertex-weight function $W$ with $t$ edges that maximize $w_{min}(G)$ (among all such graphs $G$). Let $B_{\ell}(v_1, \ldots, v_n)$ be the complete $(\l-1)$-partite graph on vertex set $\{v_1, \ldots, v_n\}$ whose partition $\mc{P}$ is given by putting each $v_i$ into part $P_{i \pmod{\l-1}}$. We find it convenient to use congruence classes as indices but we will abuse notation and write $P_i$ instead of $P_{i \pmod{\l-1}}$ for convenience. 
	
	Let $t_{n, \l}= |E(T_{n, \l})|$, where $T_{n, \l}$ is the Tur\'an graph, or the complete $\l$-partite graph on $n$ vertices where the parts are balanced, i.e. they are all either of size $\FL{\frac n\l}$ or $\CL{\frac n\l}$. Note that since we can form the graph $T_{n, \l}$ by starting with $T_{n-1, \l}$ and inserting one new vertex into a part containing $\FL{\frac{n-1}{\l}}$ vertices (and making the new vertex adjacent to everything outside of the part it is in) we have
	\begin{equation}\label{eq:tnl}
	t_{n, \l} =t_{n-1,\l}+ n-1-\FL{\frac{n-1}{\l}}.
	\end{equation}

	\begin{proposition}\label{proposition minedgeweight}
	 For all $n \ge 1$, $0 \le t \le t_{n, \l-1}$ and vertex-weight functions $W$ such that $W(v_n) \le \ldots \le W(v_1)$, there exists a graph $B \subseteq B_\ell(v_1, \ldots, v_n)$ such that $B \in \mc{F}(n, W, t)$. Consequentially,
	 \[
	 \ex(n,w_{min},K_\ell)=w_{min}(B_\ell(v_1,\dots,v_n)).
	 \]
	\end{proposition}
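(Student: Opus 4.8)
The plan is to exploit the special structure of $w_{\min}$: for an edge $v_iv_j$ with $i<j$ we have $W(v_i)\ge W(v_j)$, so $w_{\min}(v_iv_j)=W(v_j)$ is the weight of its higher-indexed (lighter) endpoint. Writing $a_j(G)$ for the number of neighbors of $v_j$ among $v_1,\dots,v_{j-1}$, this gives $w_{\min}(G)=\sum_{j=1}^n W(v_j)\,a_j(G)$. The first key observation is that $w_{\min}(G)$ depends only on the back-degree sequence $(a_1(G),\dots,a_n(G))$ and not on the identities of the earlier neighbors of each $v_j$.

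Next I would work with the prefix sums $S_m(G):=\sum_{j\le m}a_j(G)$, which equal the number of edges of the induced subgraph $G[\{v_1,\dots,v_m\}]$. Since this induced subgraph is $K_\ell$-free, Tur\'an's Theorem yields $S_m(G)\le t_{m,\ell-1}$, and of course $S_m(G)\le S_n(G)=t$; thus $S_m(G)\le\min\{t_{m,\ell-1},t\}$ for every $m$. The point is that $B_\ell$ attains the bound $t_{m,\ell-1}$ at every prefix: the round-robin assignment keeps the parts balanced on every initial segment, so $B_\ell[\{v_1,\dots,v_m\}]$ is exactly the balanced complete $(\ell-1)$-partite graph $T_{m,\ell-1}$, giving $S_m(B_\ell)=t_{m,\ell-1}$.

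Summation by parts then rewrites the objective as $\sum_j W(v_j)a_j=\sum_{j=1}^n\bigl(W(v_j)-W(v_{j+1})\bigr)S_j$, where $W(v_{n+1}):=0$. Because the weights are nonincreasing in $j$, every coefficient $W(v_j)-W(v_{j+1})$ is nonnegative, so the weight is maximized by making each prefix sum as large as possible. Applied with the bound $S_j\le t_{j,\ell-1}$ this immediately gives $w_{\min}(G)\le w_{\min}(B_\ell)$ for all $K_\ell$-free $G$, proving the ``consequently'' statement. For the parameterized claim I would produce $B\subseteq B_\ell$ with exactly $t$ edges attaining $S_j=\min\{t_{j,\ell-1},t\}$ for all $j$ simultaneously: letting $j^*$ be the largest index with $t_{j^*,\ell-1}\le t$, take every edge of $B_\ell$ inside $\{v_1,\dots,v_{j^*}\}$ and then add $t-t_{j^*,\ell-1}$ further $B_\ell$-edges incident to $v_{j^*+1}$. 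Such a $B$ is a subgraph of $B_\ell$, hence $K_\ell$-free, has exactly $t$ edges, and the summation-by-parts estimate shows it is optimal, i.e.\ $B\in\mc{F}(n,W,t)$.

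The only step requiring care---though it is bookkeeping rather than a genuine obstacle---is the final construction: one must check that $B_\ell$ restricted to any prefix is really the balanced Tur\'an graph, and that $v_{j^*+1}$ has enough incident $B_\ell$-edges to the earlier vertices to absorb the $t-t_{j^*,\ell-1}$ leftover edges. The latter holds because $t<t_{j^*+1,\ell-1}$ by maximality of $j^*$, so $t-t_{j^*,\ell-1}$ is strictly less than the back-degree of $v_{j^*+1}$ in $B_\ell$. Notably this approach avoids any induction on $n$ or $t$; the only external input is Tur\'an's Theorem together with the balancedness of the round-robin partition.
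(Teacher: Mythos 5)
Your proof is correct, and it takes a genuinely different route from the paper's. The paper argues by induction on $n$: it takes an extremal $K_\ell$-free graph $G$ with $t$ edges, deletes the lightest vertex $v_n$ (of degree $d$), applies the induction hypothesis to $G-v_n$ to get an equally good subgraph of $B_\ell(v_1,\dots,v_{n-1})$ with $t-d$ edges, and re-inserts $v_n$ with the minimum feasible degree $y=\max\{0,\,t-t_{n-1,\ell-1}\}$ so that its neighborhood avoids the part $P_n$; the weight cannot drop because the $d-y$ displaced edges are re-attached at heavier vertices (the printed inequalities contain some index slips, e.g.\ $v_1$ where $v_n$ is meant, but that is the idea). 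The parameterized family $\mc{F}(n,W,t)$ appears in the statement precisely so that this induction closes. You avoid induction entirely: you expand $w_{\min}(G)=\sum_j W(v_j)a_j(G)$ over back-degrees, identify the prefix sums $S_m(G)$ with edge counts of induced prefixes, bound them by Tur\'an's theorem, note that $B_\ell$ is prefix-optimal (each initial segment of the round-robin partition induces the balanced Tur\'an graph $T_{m,\ell-1}$), and finish by Abel summation with the nonnegative coefficients $W(v_j)-W(v_{j+1})$; your threshold construction for fixed $t$ is sound, and your feasibility check --- that $t-t_{j^*,\ell-1}$ is less than the back-degree $t_{j^*+1,\ell-1}-t_{j^*,\ell-1}$ of $v_{j^*+1}$ in $B_\ell$ --- is exactly the point that needed verifying. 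What your route buys: it isolates the structural reason $B_\ell$ is extremal, namely that it maximizes all prefix edge counts simultaneously, it yields the pointwise domination $S_j(G)\le S_j(B)$ as a clean intermediate fact, and it trades the paper's somewhat delicate vertex-reinsertion bookkeeping for a telescoping identity. What the paper's route buys: a local exchange argument in the same spirit as the Zykov duplication used for the product weight, which constructs the extremal graph step by step without having to guess the simultaneous prefix-optimality of $B_\ell$ in advance.
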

	
	\begin{proof}
	 We proceed via induction on $n$. Our base case is $n=1$, so the graph has no edges and so the claim is true. Now for the induction step, let $0 \le t \le  t_{n, \l-1}$ and $G$ be a graph with $t$ edges that has maximum weight among $K_\ell$-free graphs. Let $d_G(v_n)=d$. We are guided by the intuition that $d$ should not be too large since edges incident to $v_n$ have the smallest possible weight.  We apply our induction hypothesis to the graph $G-v_n$, so there exists some graph $G'\subseteq B_\ell(v_1, \ldots, v_{n-1})$ with $t-d$ edges and $w_{min}(G')\ge  w_{min}(G-v_n)$. 
	 
	 Let $y=\text{max}\{0, t-t_{n-1, \l-1} \}$. Let $G''$ be a graph with $t-y\leq t_{n-1,\l-1}$ edges such that $G'\subseteq G''\subseteq B_{\ell}(v_1,\dots,v_{n-1})$. Now form the graph $B$ by adding $v_n$ into $G''$ such that $d_{B}(v_n)=y$ and such that $v_n$ has no neighbors in the part $P_{n}$ of the partition of $G''$ (recall that as a subgraph of $B_\ell(v_1,\dots,v_{n-1})$, $G''$ has an $(\l-1)$-partition and the parts are indexed by congruence classes mod $\l-1$). Choosing $B$ such that $P_{n} \cap N_{B}(v_n)=\emptyset$ is possible since $y \le t_{n, \l-1}-t_{n-1,\l-1} = n-1-\FL{\frac{n-1}{\l-1}}$ by \eqref{eq:tnl}, and in $G'$ the part $P_n$ has only $\FL{\frac{n-1}{\l-1}}$ vertices. Thus $B \subseteq B(v_1, \ldots, v_n)$, and since $w_{min}(G')\ge w_{min}(G-v_1) = w_{min}(G)-dW(v_1)$, we have 
	 \[
	 w_{min}(B)\ge w_{min}(G')+W(v_2)(d-y)+W(v_1)y \ge w_{min}(G) + (W(v_2)-W(v_1))(d-y) \ge w_{min}(G)
	 \]
	 so $B\in \mc{F}(n, w, t)$.
	 
	 Thus, any extremal example will have the same weight as some subgraph of $B_\ell(v_1,\dots,v_n)$. Since $B_\ell(v_1,\dots,v_n)$ is $K_\ell$-free and has weight at least as great as any subgraph of itself, $B_\ell(v_1,\dots,v_n)$ must be an extremal example, finishing the proof.
	 \end{proof}

	\section{Multipartite Tur\'an}
In this section we address a problem suggested by De Silva, Heysse, Kapilow, Schenfisch, and Young \cite{DY}. For general graphs $H, F$ we  define the {\em extremal number of $F$ with host graph $H$}, $\ex(H, F)$, to be the largest number of edges in any graph $G$ such that $F \not\subseteq G \subseteq H$. Of course when $H=K_n$ we just get the standard extremal number $\ex(n, F)$. In \cite{DY} they determine the value of $\ex(K_{k_1, k_2, \ldots, k_r}, j K_r)$, the case where the host graph $H$ is complete $r$-partite and the forbidden graph $F$ consists of $j$ vertex-disjoint $r$-cliques. It is natural to ask for the value of $\ex(K_{k_1, k_2, \ldots, k_r}, j K_\l)$ for all $r \ge \l$, but the proof techniques in \cite{DY} do not seem to generalize here. In this section we make some progress on this problem: we handle all $r \ge \l$ but only for $j=1$. Unfortunately it seems it would require significant new ideas to handle all $j$. 

For a set of indices $P$ define $k_P:=\sum_{i \in P} k_i$.	
	\begin{theorem}\label{thm:turan}
		The extremal number $\ex(K_{k_1, k_2, \ldots, k_r}, K_\l)$ is equal to 
\[
\max_{\mc{P}}\sum_{\substack{P, P' \in \mc{P} \\ P \neq P'}} k_P k_{P'}
\]
where the max is over all partitions $\mc{P}$ of $[r]$ into $\l-1$ sets. 
	\end{theorem}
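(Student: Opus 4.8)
The plan is to reduce the problem to the product-edge-weight version already established in Proposition~\ref{prop:prod}. I would take a weighted copy of $K_r$ in which vertex $i$ receives weight $W(i):=k_i$; then $w_\Pi(ij)=k_ik_j$, and Proposition~\ref{prop:prod} evaluates $\ex(r,w_\Pi,K_\ell)$ to be exactly $\max_{\mc{P}}\sum_{P\neq P'}k_Pk_{P'}$, the partitions $\mc{P}$ of the vertex set $[r]$ of $K_r$ into $\ell-1$ parts being precisely the partitions in the theorem statement. Thus the whole theorem reduces to the single identity $\ex(K_{k_1,\ldots,k_r},K_\ell)=\ex(r,w_\Pi,K_\ell)$, which I would prove by matching lower and upper bounds.

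For the lower bound I would exhibit the natural \emph{blow-up} construction. Given a partition $\mc{P}=\{P_1,\ldots,P_{\ell-1}\}$ of $[r]$, merge the partite sets indexed by $P_a$ into a single block $B_a$ of $k_{P_a}$ vertices, and take all edges running between distinct blocks. Because $P_a\cap P_b=\emptyset$ for $a\neq b$, every cross-block pair lies in two different partite sets of the host, so all $k_{P_a}k_{P_b}$ such edges are actually present in $K_{k_1,\ldots,k_r}$; the resulting graph is complete $(\ell-1)$-partite, hence $K_\ell$-free, and its number of edges equals the weight of the corresponding complete $(\ell-1)$-partite weighting of $K_r$. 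Optimizing over $\mc{P}$ gives $\ex(K_{k_1,\ldots,k_r},K_\ell)\ge \ex(r,w_\Pi,K_\ell)$; equivalently, this construction is the blow-up of an extremal weighted graph from Proposition~\ref{prop:prod}.

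The heart of the argument is the upper bound, which I would obtain by averaging over transversals rather than by symmetrization. Let $G\subseteq K_{k_1,\ldots,k_r}$ be any $K_\ell$-free subgraph, and write $e_{ij}$ for the number of $G$-edges between parts $i$ and $j$, so $|E(G)|=\sum_{i<j}e_{ij}$. Choose a random transversal $T=\{t_1,\ldots,t_r\}$ by selecting $t_i$ uniformly and independently from part $i$. Since the parts are independent sets that are completely joined in the host, the $t_i$ are distinct and the induced graph $G[T]$ is a $K_\ell$-free graph on $r$ vertices; identifying $t_i$ with vertex $i$ of the weighted $K_r$, it is a $K_\ell$-free subgraph there of weight $\sum_{i<j}\mathbf{1}[t_it_j\in E(G)]\,k_ik_j$, which is at most $\ex(r,w_\Pi,K_\ell)$ by Proposition~\ref{prop:prod} for \emph{every} realization of $T$. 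Taking expectations and using $\Pr[t_it_j\in E(G)]=e_{ij}/(k_ik_j)$ yields $\mathbb{E}[\text{weight}]=\sum_{i<j}e_{ij}=|E(G)|$, so $|E(G)|\le \ex(r,w_\Pi,K_\ell)$, completing the matching bound.

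The main obstacle I would anticipate lies not in this averaging step but in the tempting alternative of symmetrizing $G$ \emph{within} each part (Zykov-style, duplicating a maximum-degree vertex) to turn it into a blow-up and then quoting Proposition~\ref{prop:prod} directly: once the graph is extremal, the only admissible duplications are between equal-degree vertices, which need not terminate and, worse, need not produce a graph that is homogeneous within all parts at once, since symmetrizing one part can destroy the homogeneity of another. The transversal argument bypasses this issue completely, so the remaining work is purely bookkeeping: verifying that Proposition~\ref{prop:prod} returns exactly the claimed maximum under the weighting $W(i)=k_i$, checking the one-line expectation, and reconciling the ordered-versus-unordered pair convention in the two ``max'' expressions so that the two extremal numbers are literally equal.
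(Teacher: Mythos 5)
Your proof is correct, but it reaches Proposition \ref{prop:prod} by a genuinely different route than the paper. Both arguments reduce the multipartite problem to the product-weighted number $\ex(r,w_\Pi,K_\ell)$ with $W(i)=k_i$, and both use the blow-up construction for the lower bound; the difference is how an arbitrary $K_\ell$-free $G\subseteq K_{k_1,\ldots,k_r}$ gets compared to a weighted graph on $[r]$. The paper uses Zykov symmetrization inside the host's parts: pick a maximum-degree vertex $x_i\in A_i$ and duplicate it to replace every other vertex of $A_i$, one part at a time; this never decreases the edge count, preserves $K_\ell$-freeness and containment in the host, and after one pass through the $r$ parts yields a graph in which any two parts are completely joined or completely non-adjacent --- i.e.\ a blow-up of a weighted graph $H$ on $[r]$, whence $|E(G)|\le w_\Pi(H)\le\ex(r,w_\Pi,K_\ell)$. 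You instead average over random transversals: for every realization $T$ the projected graph $H_T$ on $[r]$ is $K_\ell$-free (edges inside parts do not exist, so projection is an isomorphism onto $G[T]$), giving the pointwise bound $w_\Pi(H_T)\le\ex(r,w_\Pi,K_\ell)$, and the computation $\Pr[t_it_j\in E(G)]=e_{ij}/(k_ik_j)$ gives $\mathbb{E}[w_\Pi(H_T)]=|E(G)|$. Your argument is arguably cleaner: it requires no iterative surgery on $G$, no verification that later rounds preserve the structure created by earlier ones, and it bounds every $K_\ell$-free subgraph directly rather than only an extremal one. What symmetrization buys in return is structural information: it shows that any extremal configuration can be converted, without loss of edges, into a blow-up, which is the kind of statement one wants for stability or uniqueness considerations.

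One caveat: the ``obstacle'' you cite to justify avoiding symmetrization is not real, and in fact the paper's proof is exactly the approach you dismiss. There is no termination problem, because the process is a single pass over the parts and only needs the edge count to be non-decreasing (no strict increase, hence no appeal to extremality at each step, is required); and homogenizing $A_j$ cannot destroy the homogeneity of earlier parts, since replacing the vertices of $A_j$ by duplicates of $x_j$ leaves all adjacencies not involving $A_j$ untouched, while each duplicate inherits $x_j$'s all-or-nothing adjacency to every previously processed part. So both proofs are valid; yours is simply a different, averaging-based reduction to the same weighted result.
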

	
	Note that the expression in the ``max" above is precisely the number of edges in the complete $(\l-1)$-partite subgraph $G$ of $K_{k_1, k_2, \ldots, k_r}$ formed by merging parts of the partition $\mc{P}$. In other words, for each part $P \in \mc{P}$ the graph $G$ has a part $A_P$ consisting of $k_P$ many vertices. Thus, theorem \ref{thm:turan} states that there exists an extremal $(\l-1)$-partite graph.

\begin{proof}
Suppose $G$ is extremal, in other words $G$ has the largest possible number of edges among all subgraphs of $K_{k_1, k_2, \ldots, k_r}$ that do not contain $K_\l$. Since $G\subseteq K_{k_1, \ldots, k_r}$, $G$ admits an $r$-partition, $V(G)=A_1 \cup \ldots \cup A_r$, with $|A_i|=k_i$. Note that if $x$ and $y$ are in the same partite set $A_i$, then if we duplicate $x$ to replace $y$, the graph we obtain, $G'$, is still a subgraph of $K_{k_1, k_2, \ldots, k_r}$. We will apply the duplicating operation a bunch of times. We do not want to call the new graphs $G', G'', G''', \ldots$ so instead we will abuse notation by referring to a single graph $G$ which changes each time we do an operation.

We choose the highest degree vertex $x_1 \in A_1$, and then for every other vertex $x \in A_1$ we duplicate $x_1$ to replace $x$. After these duplications, every vertex in $G$ is either adjacent to all of $A_1$ or none of $A_1$, and the number of edges in $G$ could not have gone down. Now we choose the higest degree vertex $x_2 \in A_2$ and  for every other vertex $x \in A_2$ we duplicate $x_2$ to replace $x$. This preserves the property that every vertex is either adjacent to all of $A_1$ or none of $A_1$; additionally, now every vertex is either adjacent to all of $A_2$ or none of $A_2$. We proceed in this fashion, making all vertices in $A_i$ duplicates of each other for all $i \in [k]$. At the end of the process, the graph $G$ must have at least as many edges as when we started, and has the property that for all $1\leq i,j\leq r$, either all the edges between $A_i$ and $A_j$ are present, or none are.

The rest of the proof follows from Proposition \ref{prop:prod}: Consider the auxiliary graph $H$, where the vertices of $H$ are the sets $A_i$, and $A_iA_j$ is an edge in $H$ if all the edges between $A_i$ and $A_j$ are present in $G$. Let $w$ be a vertex-weight function on $H$ where $w(A_i)=|A_i|=k_i$. Then, the product-edge-weight function, $w_\Pi$, has the property that $w_\Pi (H)=|E(G)|$. Clearly, if $H$ had a copy of $K_\ell$, $G$ would also have a copy, so $H$ is $K_\ell$-free. Thus by Proposition \ref{prop:prod},
\[
|E(G)|=w_\Pi(H)\leq \ex(n,w_\Pi,K_\ell)=
\max_{\mc{P}}\sum_{\substack{P, P' \in \mc{P} \\ P \neq P'}} W_P W_{P'}
=
\max_{\mc{P}}\sum_{\substack{P, P' \in \mc{P} \\ P \neq P'}} k_P k_{P'}.
\]

\end{proof}
	
	\section{Maximum Rectilinear Crossing Numbers of Certain Trees}\label{section crossing numbers}

	The most natural question one can ask about embedding graphs in the plane is if it can be done in such a way that produces no edge crossings. If so, this graph is called planar. If there is no way to embed a graph in the plane, the next natural question is to ask for the minimum number of edge crossings over all embeddings. This is known as the \emph{crossing number} of $G$, denoted $\mathrm{cr}(G)$ and has been extensively studied (\cite{Zarank},\cite{GJ},\cite{HH}). For a survey about crossing numbers and related problems, see \cite{S}.

	In addition to the standard crossing numbers, one can also compute the \emph{rectilinear crossing number} of a graph, which is the minimum number of crossings over all embeddings such that the edges are drawn as a straight line segment between the two incident vertices. Such a drawing is called a \emph{rectilinear drawing}. Rectilinear crossing numbers have received a lot of attention (\cite{J}, \cite{AF}, \cite{BD}, \cite{FPS}).
	
	Crossing numbers are usually defined in terms of the minimum number of crossings necessary, but an interesting maximization problem can also be studied. Mainly, one can ask for the maximum number of crossings possible over every rectilinear drawing. To study this problem, we will restrict ourselves to specific types of rectilinear drawings. A rectilinear drawing of a graph $G$ is a \emph{legal rectilinear drawing} if no edge passes through a vertex it is not incident with, and no three edges pass through the same point. The \emph{maximum rectilinear crossing number} of a graph $G$, denoted $\mathrm{max-\overline{cr}}(G)$ is the maximum number of crossings over all legal rectilinear drawings of $G$. We will only be concerned with legal drawings here, so henceforth we will assume all rectilinear drawings are legal.
		
	Standard (i.e. non-rectilinear) maximum crossing numbers have also been studied (\cite{CFKUVW}, \cite{HZ}, \cite{GR}, \cite{HM}). It is worth noting that while it is not known if standard maximum crossing numbers are monotone with respect to subgraphs, it is the case that maximum rectilinear crossing numbers are, or in other words if $F$ is a subgraph of $G$, then $\mathrm{max-\overline{cr}}(F)\leq \mathrm{max-\overline{cr}}(G)$ (\cite{RSP}). This fact will be very useful for us moving forward. For some of the known results about maximum rectilinear crossing numbers, see \cite{BJL}, \cite{F}, \cite{FHHK} and \cite{H}. The only known work on maximum rectilinear crossing numbers of trees is by Fallon, Hogenson, Keough, Lomel\'i, Schaefer and Sober\'on~\cite{Keo}.
	
	In this section we will present two results in maximum rectilinear crossing numbers involving certain classes of trees, both of which are solved using applications of Tur\'an's theorem and weighted Tur\'an numbers. The first result involves spiders, or trees that have a single vertex of degree $\geq 3$. This result iteratively uses Mantel's theorem (Tur\'ans theorem for the forbidden graph $K_3$). The second result involves trees of diameter at most $4$, and will follow from an application of the min-edge-weight Tur\'an numbers discussed in section \ref{section minedgeweight}. 
	First, we present a general observation about maximum crossing numbers
	
	\begin{observation}[The thrackle bound]
	Adjacent edges cannot cross eacho ther, so for any graph $G$,
	\[
	\mathrm{max-\overline{cr}}(G)\leq \binom{|E(G)|}2-\sum_{v\in V(G)}\binom{d(v)}2.
	\]
	\end{observation}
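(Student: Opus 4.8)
The plan is to bound the number of crossings by counting pairs of edges and then discarding those pairs that can never contribute a crossing. In any legal rectilinear drawing each crossing is a point where two distinct edges meet, and since the edges are drawn as straight line segments, any two edges can cross in at most one point. Hence the total number of crossings is at most the number of pairs of edges that actually cross, which in turn is bounded by the number of pairs of edges that \emph{can} cross.

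First I would account for all pairs of edges: there are $\binom{|E(G)|}{2}$ of them. Next I would argue that adjacent edges contribute no crossings. If two edges share a common vertex $v$, then as straight segments they both emanate from the point representing $v$; two such segments meet only at $v$, and since the drawing is legal no edge passes through a vertex it is not incident with, so they cannot cross anywhere else. Thus every pair of adjacent edges contributes $0$ to the crossing count.

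Then I would count the adjacent pairs. At each vertex $v$ the edges incident to $v$ form $\binom{d(v)}{2}$ pairs, and summing over all vertices gives $\sum_{v \in V(G)} \binom{d(v)}{2}$. Since $G$ is simple, two distinct edges share at most one endpoint, so each adjacent pair is counted exactly once in this sum and there is no overcounting. Subtracting these from the total leaves at most $\binom{|E(G)|}{2} - \sum_{v \in V(G)} \binom{d(v)}{2}$ pairs of non-adjacent edges, each crossing at most once, which gives the claimed upper bound on $\maxcr(G)$.

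There is essentially no serious obstacle here; the only points requiring care are the geometric fact that two segments issuing from a common point meet only at that point (so adjacent edges genuinely cannot cross in a legal drawing), and the bookkeeping observation that $\sum_{v} \binom{d(v)}{2}$ counts each adjacent pair exactly once, which relies on $G$ being simple. Everything else is a direct double count.
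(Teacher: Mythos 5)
Your proof is correct and follows exactly the reasoning the paper intends: the observation's one-line justification (adjacent straight-line edges meet only at their shared endpoint, so only the $\binom{|E(G)|}{2}-\sum_{v\in V(G)}\binom{d(v)}{2}$ non-adjacent pairs can each contribute at most one crossing) is precisely the double count you spell out. Your added care about legality ruling out degenerate meetings and about simplicity preventing overcounting of adjacent pairs is sound but does not change the approach.
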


Graphs that attain the thrackle bound are called \emph{thrackles}. In \cite{W}, it was shown that in the rectilinear setting, caterpillars (trees such that if you remove all the leaves from the tree, you are left with a path) are thrackles, but no other trees are. Note that to get better upper bounds on $\mathrm{max-\overline{cr}}(G)$ we have to count pairs of nonadjacent edges that do not cross. We call such pairs {\em nontrivial missed crossings}.

	Now, we present a lemma that will be useful for both results. A \emph{spider} is a tree with a single vertex of degree $\geq 3$. Let $S$ be a spider and let $v$ be the vertex in $S$ with $d(v)\geq 3$. A maximal path that starts at $v$ is called a \emph{leg} of the spider. In each leg, the edge that is farthest from $v$ will be called a \emph{foot}, and a set of such edges will be called \emph{feet}. 
	
	\begin{lemma}\label{Lemma: three crossing edges} 
		Let $S$ be a spider with exactly three legs, each of length at least $2$. Let $e_1$, $e_2$, and $e_3$ be the three feet of $S$. Given a rectilinear drawing of $S$ where $e_1$, $e_2$ and $e_3$ all pairwise cross, then there is some edge $e\in E(S)\setminus \{e_1,e_2,e_3\}$ and some $1 \le i \le 3$ such that the pair $e$ and $e_i$ are a nontrivial missed crossing. Furthermore, the edges $e$ and $e_i$ do not appear on the same leg of $S$.
	\end{lemma}

\begin{proof}
	Assume $e_1$, $e_2$ and $e_3$ all cross in a drawing of $S$. Consider the plane separated into seven regions by the lines that pass through $e_1$, $e_2$ and $e_3$, and label the regions as in Figure \ref{figure:three edges crossing}, with the central region being labeled $C$.
	
	Let $v$ be the unique vertex of degree $3$ in $S$. We assume without loss of generality that $v$ appears in either $A_1$, $B_1$ or the central region, $C$. Let $v_1$ be the vertex of degree $2$ incident with $e_1$. Let us consider the path from $v$ to $v_1$. We need only find one edge that misses edges $e_2$ or $e_3$ once to satisfy the lemma. If to the contrary every edge intersects both $e_2$ and $e_3$, 
	%the edges must bounce back and forth from vertices in 
	then as we traverse the path from $v$ to $v_1$, the vertices must alternate between the region $A_1$ and region $B_1\cup C$. This gives a contradiction though since eventually this path must make it to $v_1$, which will require an edge to miss either $e_2$ or $e_3$. Whichever edge misses either $e_2$ or $e_3$, say without loss of generality $e_2$, is in a separate branch of $T''$ from $e_2$ since it is in the branch with $e_1$. This completes the proof.
	
\end{proof}

\begin{figure}[ht]
	\centering
	\begin{tikzpicture}[line cap=round,line join=round,>=triangle 45,x=1.0cm,y=1.0cm]
	\draw (0,0)-- (2,3);
	\draw (2,0)-- (0,3);
	\draw (-1,1.3)-- (3,1.3);
	\draw[dashed] (-1.9,1.3)--(3.9,1.3);
	\draw[dashed] (-0.5,-0.75)--(2.5,3.75);
	\draw[dashed] (-0.5,3.75)--(2.5,-0.75);

	\draw[color=black] (1,2.5) node {$A_1$};
	\draw[color=black] (1,.5) node {$B_1$};
	\draw[color=black] (-.5,.5) node {$A_2$};
	\draw[color=black] (2.5,2.5) node {$B_2$};
	\draw[color=black] (2.5,.5) node {$A_3$};
	\draw[color=black] (-.5,2.5) node {$B_3$};
	\draw[color=black] (-1,1.5) node {$e_1$};
		\draw[color=black] (-1,1.1) node {$v_1$};
	\draw[color=black] (0,3.2) node {$e_2$};
	\draw[color=black] (0,0.2) node {$e_3$};
	
	\draw [fill=black] (0,0) circle (2.0pt);
	\draw [fill=black] (2,0) circle (2.0pt);
	\draw [fill=black] (-1,1.3) circle (2.0pt);
	\draw [fill=black] (3,1.3) circle (2.0pt);
	\draw [fill=black] (0,3) circle (2.0pt);
	\draw [fill=black] (2,3) circle (2.0pt);
	\end{tikzpicture}
	\caption{\label{figure:three edges crossing} Areas}
\end{figure}

\subsection{Spiders}\label{section spiders}

In \cite{Keo}, the authors studied the maximum rectilinear crossing numbers of spiders. Given a spider $S$, let $v$ be the vertex of degree $\geq 3$. We will mainly be concerned with how many vertices are at distance $i$ from $v$ for each $i$, so we will say $S$ is a spider of type $(a_1,\dots,a_{\text{ecc}(v)})$ (Where $\text{ecc}(v)$ is the eccentricity of a vertex, or the length of the longest geodesic starting at $v$.) if there are exactly $a_i$ vertices at distance $i$ from $v$. The authors in \cite{Keo} describe an algorithm to draw spiders with many crossings which gives the following lower bound.

\begin{theorem}{\cite{Keo}, Proposition 2.2}\label{theorem spiders lower bound}
Let $S$ be a spider with $k\geq 3$ legs of length $\ell_1\geq\ell_2\geq\dots\geq\ell_k$. Then
\[
\mathrm{max-\overline{cr}}(S)\geq \binom{n-1}2-\sum_{v\in V(S)}\binom{d(v)}2-\sum_{i=3}^k(\ell_i-1)\left\lfloor\frac{i-1}2\right\rfloor.
\]
\end{theorem}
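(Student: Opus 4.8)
The plan is to prove this bound constructively: since $\maxcr(S)$ is a maximum over all legal rectilinear drawings, it suffices to exhibit one drawing and count its crossings. Because $S$ is a tree on $n$ vertices it has exactly $n-1$ edges, so the thrackle bound gives $\maxcr(S) \le \binom{n-1}2 - \sum_{v\in V(S)}\binom{d(v)}2$. Thus it is enough to produce a single drawing whose number of nontrivial missed crossings (pairs of nonadjacent edges that fail to cross) is at most $\sum_{i=3}^k (\ell_i-1)\FL{\frac{i-1}2}$; subtracting this from the thrackle bound then yields the theorem.

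I would place all vertices in convex position, so that two nonadjacent edges cross if and only if their four endpoints alternate around the convex hull, reducing the entire question to choosing a good cyclic order of the vertices. First I would handle the two longest legs: together with the center they form a single path (the spine), and a path admits a convex thrackle drawing (for instance the star-polygon placement, visiting the vertices of a regular polygon in every-other order), so legs $1$ and $2$ can be drawn with no missed crossings between them. This is precisely why the correction sum starts at $i=3$. I would then insert the remaining legs $3,4,\ldots,k$ one at a time as branches emanating from the center, each time choosing the arc into which the new leg is threaded so as to minimize the missed crossings created against the already-drawn portion of $S$.

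The heart of the argument is the per-leg count, which I would phrase as an invariant maintained through the insertion process: inserting leg $i$ can be done so that all but one of its $\ell_i$ edges each fail to cross at most $\FL{\frac{i-1}2}$ of the edges drawn so far, and so that no other missed crossings arise, for a total of at most $(\ell_i-1)\FL{\frac{i-1}2}$. The factor $\FL{\frac{i-1}2}$ should come from the geometry at the center: the $i-1$ previously placed legs leave the center pointing into various arcs, and by inserting the new leg on the ``majority'' side one forces each of its outer edges to miss only the roughly half of those legs lying on the minority side. The main obstacle is exactly this simultaneous optimization—verifying that a \emph{single} insertion position attains the bound $\FL{\frac{i-1}2}$ against all $i-1$ previous legs at once, while also confirming that the edges internal to leg $i$, and the edges of leg $i$ against the spine, contribute no unaccounted misses. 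Making the cyclic-order bookkeeping rigorous (tracking precisely which four-tuples of endpoints alternate after each insertion) is where the real work lies; once the invariant is established, summing over $i=3,\ldots,k$ and subtracting from the thrackle bound completes the proof.
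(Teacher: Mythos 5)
First, a point of comparison: the paper never proves this statement at all --- it is imported from \cite{Keo} (Proposition 2.2 there), where it comes from an explicit drawing algorithm --- so your proposal has to stand entirely on its own, and it does not. Your reductions are sound: $S$ has $n-1$ edges, so it suffices to exhibit one legal rectilinear drawing with at most $\sum_{i=3}^k(\ell_i-1)\FL{\frac{i-1}{2}}$ nontrivial missed crossings; convex position turns crossings into alternation of endpoints in the cyclic order; the spine (legs $1$ and $2$ plus the center) can be thrackled; and the per-leg budgets $(\ell_i-1)\FL{\frac{i-1}{2}}$ sum to the right total. But everything then rests on your ``insertion invariant,'' which you state and never prove, and which you yourself defer as ``where the real work lies.'' That invariant is not bookkeeping; it \emph{is} the theorem. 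Establishing it requires, for each $i$, an explicit position in the cyclic order for the $\ell_i$ new vertices together with a verification that (a) all nonadjacent pairs inside leg $i$ cross, (b) the exempted edge of leg $i$ misses nothing, (c) each remaining edge of leg $i$ misses at most $\FL{\frac{i-1}{2}}$ previously drawn edges, and (d) no other misses arise --- all simultaneously. These constraints genuinely conflict: already for three legs of length $2$, with the spine in its star-polygon thrackle order $w_1,v,w_2,u_1,u_2$, forcing $vu_3$ to cross both feet $u_1w_1$ and $u_2w_2$ pins $u_3$ to the arc between $u_1$ and $u_2$, after which no placement of $w_3$ lets $u_3w_3$ cross both $vu_1$ and $vu_2$. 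The unavoidable miss lands on a \emph{center} edge of some leg, not on a ``same-level edge of a minority-side leg'' as your heuristic predicts; the budget happens to absorb it when $i=3$, but it shows the mechanism you describe is not what occurs, and that the verification is a real case analysis. A greedy one-leg-at-a-time argument also needs a strengthened induction hypothesis describing the structure of the partial drawing; without one, there is no reason a good position for leg $i$ even exists after legs $3,\dots,i-1$ have been placed arbitrarily (subject only to their own budgets).

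A smaller symptom of the same looseness: you say to insert leg $i$ on the \emph{majority} side so that its edges miss the minority-side legs. In the balanced-bipartition picture that produces this bound (the same one behind the paper's matching upper bound via Mantel's theorem), legs on the \emph{same} side miss each other and opposite-side legs cross, so the new leg must be placed on the side currently holding $\FL{\frac{i-1}{2}}$ legs; placed on the majority side it would incur $\CL{\frac{i-1}{2}}$ misses per edge, exceeding the budget whenever $i$ is even. If you want a completable route, replace the incremental scheme with a single global construction --- essentially what \cite{Keo} does, and what this paper itself does for diameter-$4$ trees in Section \ref{section diameter 4}: place vertices at odd distance from the center on one horizontal line and vertices at even distance on the other, thread the legs alternately left and right by index parity (as in Figure \ref{figure diameter 4 drawing}), and count the misses directly; each level-$j$ edge with $j\ge 2$ of the $i$-th leg then misses exactly $\FL{\frac{i-1}{2}}$ edges, giving the required total in one pass rather than by a delicate induction.
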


The authors further conjectured that this bound is correct. We confirm that conjecture by providing a matching upper bound. The following fact will be useful:
\begin{observation}\label{observation spider equality}
Let $S$ be a spider of type $(a_1,a_2,\dots,a_{\text{ecc}(V)})$ that has $k$ legs of length $\ell_1\geq \ell_2\geq \dots\geq \ell_k$. Then
\[
\sum_{i=3}^k(\ell_i-1)\left\lfloor\frac{i-1}2\right\rfloor=\sum_{j=2}^{\text{ecc}(v)}\left(\binom{\left\lfloor\frac{a_j}2\right\rfloor}2+\binom{\left\lceil\frac{a_j}2\right\rceil}2\right).
\]
\end{observation}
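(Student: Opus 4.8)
The plan is to prove the identity by a double-counting argument that re-expresses the left-hand sum by organizing it according to distance from $v$ rather than by leg index. The key preliminary observation is the relationship between the leg lengths and the profile $(a_1, \ldots, a_{\mathrm{ecc}(v)})$: since $a_j$ counts the vertices at distance exactly $j$ from $v$, and a leg of length $\ell_i$ contributes one such vertex precisely when $\ell_i \ge j$, we have $a_j = |\{i : \ell_i \ge j\}|$. Because the legs are sorted so that $\ell_1 \ge \ell_2 \ge \cdots \ge \ell_k$, the legs of length at least $j$ are exactly those indexed $1, 2, \ldots, a_j$.

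First I would rewrite the left-hand side. Since $\left\lfloor (i-1)/2 \right\rfloor = 0$ for $i \in \{1,2\}$, extending the sum to start at $i=1$ changes nothing, so the left side equals $\sum_{i=1}^{k} (\ell_i - 1)\left\lfloor (i-1)/2 \right\rfloor$. Writing $\ell_i - 1 = \sum_{j=2}^{\ell_i} 1$ and exchanging the order of summation turns this into $\sum_{j=2}^{\mathrm{ecc}(v)} \sum_{i : \ell_i \ge j} \left\lfloor (i-1)/2 \right\rfloor$, which by the observation above equals $\sum_{j=2}^{\mathrm{ecc}(v)} \sum_{i=1}^{a_j} \left\lfloor (i-1)/2 \right\rfloor$.

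It then remains to establish the elementary identity $\sum_{i=1}^{m} \left\lfloor (i-1)/2 \right\rfloor = \binom{\left\lfloor m/2 \right\rfloor}{2} + \binom{\left\lceil m/2 \right\rceil}{2}$ for every nonnegative integer $m$; applying it with $m = a_j$ and summing over $j$ yields exactly the right-hand side. To prove the identity I would interpret $\left\lfloor (i-1)/2 \right\rfloor$ as the number of indices $i'$ with $1 \le i' < i$ having the same parity as $i$. Summing over $i$ then counts all same-parity pairs $\{i', i\} \subseteq \{1, \ldots, m\}$, of which there are $\binom{\left\lceil m/2 \right\rceil}{2}$ among the odd indices and $\binom{\left\lfloor m/2 \right\rfloor}{2}$ among the even indices.

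I expect the only subtle point to be bookkeeping: confirming that the set $\{i : \ell_i \ge j\}$ really is the initial segment $\{1, \ldots, a_j\}$ (which uses the sortedness of the leg lengths in an essential way) and that the summation bounds align, in particular that the outer index $j$ ranges up to $\mathrm{ecc}(v) = \ell_1$. None of these steps presents a genuine obstacle; the argument is a routine interchange of summation followed by the one-line combinatorial identity above.
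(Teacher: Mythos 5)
Your proposal is correct and follows essentially the same route as the paper: both proofs use the observation that $a_j$ is the number of legs of length at least $j$ to interchange the order of summation, arriving at the identical intermediate expression $\sum_{j=2}^{\mathrm{ecc}(v)}\sum_{i=1}^{a_j}\left\lfloor\frac{i-1}{2}\right\rfloor$. The only (immaterial) difference is in evaluating the inner sum: the paper splits it into two arithmetic series, while you count same-parity pairs combinatorially; both are one-line verifications of the same elementary identity.
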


\begin{proof}
Let $v$ be the vertex of degree $\geq 3$ in $S$. First note that $a_1=k$ is the number of legs of $S$ that have length at least one, and in general $a_i$ is the number of legs of $S$ that have length at least $i$. The left-hand sum can be interpreted as counting edges that are not adjacent to $v$, and are not in the longest two legs of $S$, where each edge in the $i$th longest leg is given a weight of $\left\lfloor\frac{i-1}2\right\rfloor$. We can instead count this by adding up the weights of all the edges that are at the same distance from $v$, then summing over the possible distances. Thus,
\[
\sum_{i=3}^k(\ell_i-1)\left\lfloor\frac{i-1}2\right\rfloor=\sum_{j=2}^{\text{ecc}(v)}\sum_{i=1}^{a_j}\left\lfloor\frac{i-1}2\right\rfloor.
\]
Then, we have
\[
\sum_{j=2}^{\text{ecc}(v)}\sum_{i=1}^{a_j}\left\lfloor\frac{i-1}2\right\rfloor=\sum_{j=2}^{\text{ecc}(v)}\left(\sum_{s=1}^{\lfloor a_j/2\rfloor-1}s+\sum_{s=1}^{\lceil a_j/2\rceil-1}s\right)=\sum_{j=2}^{\text{ecc}(v)}\left(\binom{\left\lfloor\frac{a_j}2\right\rfloor}2+\binom{\left\lceil\frac{a_j}2\right\rceil}2\right).
\]
\end{proof}

\begin{theorem}
Given a spider $S$ of type  $(a_1,\dots,a_{\text{ecc}(v)})$,
\[ \mathrm{max}-\overline{cr}(S)=\binom{n-1}2-\sum_{v\in V(S)}\binom{d(v)}2-\sum_{i=2}^{\text{ecc}(v)}\left(\binom{\left\lfloor\frac{a_i}2\right\rfloor}2+\binom{\left\lceil\frac{a_i}2\right\rceil}2\right).
\]
\end{theorem}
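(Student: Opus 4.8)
The plan is to prove the matching upper bound, since Theorem \ref{theorem spiders lower bound} already gives the lower bound and Observation \ref{observation spider equality} shows the two stated expressions for the correction term agree. So I must show that in \emph{any} legal rectilinear drawing of $S$, the number of nontrivial missed crossings is \emph{at least} $\sum_{i=2}^{\text{ecc}(v)}\left(\binom{\lfloor a_i/2\rfloor}2+\binom{\lceil a_i/2\rceil}2\right)$. The main tool is Lemma \ref{Lemma: three crossing edges}: whenever three feet pairwise cross, some foot must miss an edge off its own leg. The strategy is to set up a careful counting scheme that extracts one missed crossing for each of the pairs counted by the binomial terms, and to make these missed crossings distinct.

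First I would fix a drawing and organize the accounting \emph{level by level}, i.e.\ by distance $j$ from $v$, since the target sum decomposes as $\sum_j \left(\binom{\lfloor a_j/2\rfloor}2+\binom{\lceil a_j/2\rceil}2\right)$. At level $j$ there are $a_j$ edges (one on each leg of length $\ge j$); the binomial term $\binom{\lfloor a_j/2\rfloor}2+\binom{\lceil a_j/2\rceil}2$ is exactly the number of pairs within two groups of sizes $\lfloor a_j/2\rfloor$ and $\lceil a_j/2\rceil$. This suggests I should partition the $a_j$ edges at level $j$ into two classes according to which ``side'' of $v$ they emanate toward, and argue that two edges on the same side at the same level cannot cross, forcing a missed crossing for each same-side pair. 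The clean way to formalize ``side'' is to consider the cyclic/angular order of the legs around $v$ and split them by a line through $v$; same-side legs at the same distance then behave like nearly-parallel segments that fail to cross.

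Next I would connect this level-by-level picture back to Lemma \ref{Lemma: three crossing edges}, which is stated for the three \emph{feet} of a three-legged sub-spider. The key reduction is that maximum rectilinear crossing number is monotone under taking subgraphs (cited in the excerpt via \cite{RSP}), so for any three legs I may restrict attention to the three-legged spider they span and apply the lemma to conclude that if all three feet cross then some foot misses an edge on a \emph{different} leg. Iterating this over an appropriate collection of triples—chosen so that the ``different leg'' guarantee keeps the produced missed crossings distinct—should yield the required count. I expect to induct on the number of legs (or on the $a_i$), peeling off the two longest legs at each level (which correspond to the $i=1,2$ terms that are \emph{absent} from the sum, matching the $\sum_{i=3}^k$ in Theorem \ref{theorem spiders lower bound}).

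The hard part will be the bookkeeping that guarantees the missed crossings I charge are all \emph{distinct}, so that the individual lower bounds of ``one missed crossing per same-side pair'' genuinely sum. Lemma \ref{Lemma: three crossing edges} produces a missed crossing between a foot and an edge off its leg, but naively different triples could produce the same pair, causing undercounting on my side (which is fine) or, more dangerously, double-\emph{charging} the same deficit against two different required pairs. To handle this I would assign each missed crossing to a unique level and side-pair via the partition above, and verify that the edge guaranteed by the lemma always lies at a level and on a side consistent with exactly one charge. Establishing this injectivity—essentially that the geometric obstruction at level $j$ on one side accounts for precisely $\binom{\lfloor a_j/2\rfloor}2+\binom{\lceil a_j/2\rceil}2$ missed crossings and no others are reused—is where the real work lies; everything else is either the cited monotonicity, the cited lower bound, or the elementary identity already proved in Observation \ref{observation spider equality}.
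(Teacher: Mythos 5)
Your frame is right (lower bound from Theorem \ref{theorem spiders lower bound} plus Observation \ref{observation spider equality}, then a level-by-level count of missed crossings driven by Lemma \ref{Lemma: three crossing edges}), but the proposal has a genuine gap at its core: the mechanism that actually produces $\binom{\lfloor a_i/2\rfloor}{2}+\binom{\lceil a_i/2\rceil}{2}$ missed crossings at each level $i$ is never supplied. Your first candidate mechanism---splitting the legs by a line through $v$ and claiming that two same-side edges at the same level cannot cross---is false: legs can wander anywhere in the plane, so same-side, same-level edges can certainly cross (extremal drawings contain many such crossings), and nothing forces the two sides of such a line to have sizes $\lfloor a_i/2\rfloor$ and $\lceil a_i/2\rceil$. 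Your second candidate (induct on the number of legs, peel off the two longest, iterate the Lemma over ``an appropriate collection of triples'') is exactly the part you concede is unresolved (``where the real work lies''), so the argument is incomplete precisely at the step that needs proving. Incidentally, the appeal to subgraph monotonicity of $\maxcr$ is not needed: restricting the given drawing to three legs already yields a drawing of a three-legged spider to which the Lemma applies directly.

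The missing idea, and the paper's actual route, is to convert Lemma \ref{Lemma: three crossing edges} into a Tur\'an-type statement rather than a charging scheme over triples. For each level $i\ge 2$, form an auxiliary graph $G_i$ whose vertices are the $a_i$ edges of $E_i$, joining two of them when each crosses every edge on the other's leg (truncated at level $i$). A triangle in $G_i$ would give three pairwise-crossing feet of a three-legged sub-spider in which no foot misses any edge on another leg, contradicting the Lemma; hence $G_i$ is triangle-free, and Mantel's theorem forces $G_i$ to be missing at least $\binom{a_i}{2}-\left\lfloor a_i^2/4\right\rfloor = \binom{\lfloor a_i/2\rfloor}{2}+\binom{\lceil a_i/2\rceil}{2}$ edges, each missing edge certifying a nontrivial missed crossing involving an edge of $E_i$. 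This is where the balanced-bipartite numbers come from: Mantel's extremal graph, not a geometric bisection. The distinctness bookkeeping you flagged as the danger is then easy: process levels from $i=\mathrm{ecc}(v)$ downward and delete $E_i$ once processed, so every crossing counted at level $i$ uses an edge of $E_i$ while all crossings counted later use only edges at levels strictly below $i$; within a level, distinct missing pairs of $G_i$ yield distinct missed crossings because the two legs involved differ. Without this Mantel step your outline cannot be completed as written.
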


\begin{proof}
	The lower bound follows from Theorem \ref{theorem spiders lower bound} and Observation \ref{observation spider equality}, so we need only establish the upper bound on $\mathrm{max-\overline{cr}(S)}$.	
	
	Fix a rectilinear drawing of $S$. Let $E_i$ be the set of the $a_i$ edges that connect the vertices at distance $i$ from $v$ to the vertices at distance $i-1$ from $v$. For each $2\leq i\leq \text{ecc}(v)$, we will construct an auxiliary graph $G_i$ with a vertex $v_e$ corresponding to each edge  $e \in E_i$, such that $v_e$ is adjacent to $v_f$ in $G_i$ if we have either {\em (i)} the edge $e$ does not cross some edge in the same leg of $S$ as $f$, or {\em (ii)} the edge $f$ does not cross some edge in the same leg of $S$ as $e$. For each triple $\{e_1=u_1v_1,\; e_2=u_2v_2,\; e_3=u_3 v_3\}\in\binom{E_i}3$, with $d(v_j,v)=i-1$ and $d(u_j,v)=i$ for $1\leq j\leq 3$), apply Lemma \ref{Lemma: three crossing edges} (Let $S$ be the spider with $3$ legs, each terminating at some $v_j$). Lemma \ref{Lemma: three crossing edges} tells us that $\{e_1,e_2,e_3\}$ does not induce a triangle in $G_i$. Thus by Mantel's theorem, this graph is missing at least $\binom{\left\lfloor a_i/2\right\rfloor}2+\binom{\left\lceil a_i/2\right\rceil}2$ edges. Each missing edge in $G_i$ corresponds to a pair of edges in $S$ forming a nontrivial missed crossing, at least one of which is in $E_i$.

	We first apply the above argument to $E_{\text{ecc}(v)}$, giving us $\binom{\left\lfloor a_{\text{ecc}(v)}/2\right\rfloor}2+\binom{\left\lceil a_{\text{ecc}(v)}/2\right\rceil}2$ missed crossings involving edges on this level. We can then delete all the edges in $E_{\text{ecc}(v)}$ and continue the process with the edges in $E_{\text{ecc}(v)-1}$. In this way, we do not overcount. Adding up all the missed crossings, we get that
		\[
		\maxcr(S)\leq \binom{n-1}2-\sum_{v\in V(S)}\binom{d(v)}2-\sum_{i=2}^{\text{ecc}(v)}\left(\binom{\left\lfloor\frac{a_i}2\right\rfloor}2+\binom{\left\lceil\frac{a_i}2\right\rceil}2\right).
		\]
	\end{proof}

\subsection{Trees of diameter $4$}\label{section diameter 4}

Now we present an application of the min-edge-weight Tur\'an numbers to maximum rectilinear crossing numbers. Note that if a tree has diameter $\leq 3$, it must be a caterpillar, and so attains the thrackle bound, as shown in \cite{W}. Thus, in this section we will assume the trees we are dealing with have diameter exactly $4$.

Given a tree $T$ of diameter $4$, let $v$ be the unique vertex of eccentricity $2$ in $T$. We will call $v$ the \emph{root} of $T$. Note that if there are only two vertices at distance $2$ from $v$, then $T$ is a caterpillar and so the crossing number is known. Thus, we are concerned with trees of height $2$ that have at least three vertices at distance $2$ from $v$. 

Given a tree $T$ of diameter $4$ with root $v$, we say $T$ is of type $(c_1,\dots,c_k)$, with $c_1\geq\dots\geq c_k$, if $v$ has $k$ children, $u_1,\dots,u_k$ such that $u_i$ has $c_i$ children for each $1\leq i\leq k$. Note that the type of a tree defined here is not the same as the type of a spider defined in Section \ref{section spiders}. Let 
\[
d=d(c_1,\dots,c_k)=\sum_{i=1}^{\lceil n/2\rceil-1} ic_{2i+1}+\sum_{i=1}^{\lfloor n/2\rfloor} ic_{2i+2},
\]
and note that given a vertex set $\{v_1,\dots,v_k\}$ with vertex weights $w(v_i)=c_i$, we have that
\[
d=w_{min}(K_k)-w_{min}(B_3(v_1,\dots,v_k))=w_{min}(K_k)-\ex(k,w_{min},K_3),
\]
where $B_3(v_1,\dots,v_n)$ is the graph defined in section \ref{section minedgeweight}, and the second equality follows from Proposition \ref{proposition minedgeweight}. It is worth noting that $B_3(v_1,\dots,v_n)$ is simply the balanced bipartite graph with even-indexed vertices in one partite set, and odd-indexed vertices in the other. We now show how to use min-edge-weight extremal numbers to bound the maximum rectilinear crossing numbers of trees of diameter $4$.

\begin{figure}[ht]
	\centering
	\begin{tikzpicture}[line cap=round,line join=round,>=triangle 45,x=1.0cm,y=2.0cm]\
	\draw[dashed] (-4,0)--(4,0);
	\draw[dashed] (-4,1)--(4,1);
	\draw (0,1)--(1.5,0);
	\draw (0,1)--(0.5,0);
	\draw (0,1)--(-0.5,0);
	\draw (0,1)--(-1.5,0);
	\draw (1.5,0)--(-2.1,1);
	\draw (1.5,0)--(-2.4,1);
	\draw (0.5,0)--(-1,1);
	\draw (-0.5,0)--(1.1,1);
	\draw (-0.5,0)--(1.4,1);
	\draw (-1.5,0)--(3.5,1);
	\draw (-1.5,0)--(2.5,1);
	\draw (-1.5,0)--(3,1);
	
	\draw[color=black] (-1.5,-0.2) node {$x_1$};
	\draw[color=black] (1.5,-0.2) node {$x_2$};
	\draw[color=black] (-0.5,-0.2) node {$x_3$};
	\draw[color=black] (0.5,-0.2) node {$x_4$};
	\draw[color=black] (0,1.2) node {$v$};
	\draw[color=black] (3,1.2) node {$x_1'$};
	\draw[color=black] (-2.25,1.2) node {$x_2'$};
	\draw[color=black] (1.25,1.2) node {$x_3'$};
	\draw[color=black] (-1,1.2) node {$x_4'$};

	\draw [fill=black] (0,1) circle (2.0pt);
	\draw [fill=black] (-1/2,0) circle (2.0pt);
	\draw [fill=black] (-1.5,0) circle (2.0pt);
	\draw [fill=black] (1/2,0) circle (2.0pt);
	\draw [fill=black] (1.5,0) circle (2.0pt);
	\draw [fill=black] (-1,1) circle (2.0pt);
	\draw [fill=black] (-2.4,1) circle (2.0pt);
	\draw [fill=black] (-2.1,1) circle (2.0pt);
	\draw [fill=black] (1.1,1) circle (2.0pt);
	\draw [fill=black] (1.4,1) circle (2.0pt);
	\draw [fill=black] (3,1) circle (2.0pt);
	\draw [fill=black] (2.5,1) circle (2.0pt);
	\draw [fill=black] (3.5,1) circle (2.0pt);
	\end{tikzpicture}
	\caption{\label{figure diameter 4 drawing} A tree of type (3,2,2,1)}
\end{figure}
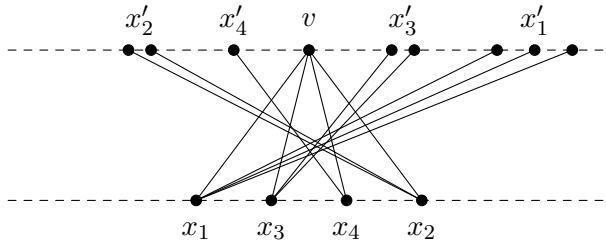

\begin{theorem}
	Let $T$ be a tree of diameter $4$ and type $(c_1,\dots,c_k)$, and let $d=d(c_1,\dots,c_k)$. Then 
	\[
	\maxcr(T)=\binom{n-1}2-\sum_{v\in V(T)}\binom{d(v)}2-d.
	\]
\end{theorem}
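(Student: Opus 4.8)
The plan is to rephrase the statement in terms of \emph{nontrivial missed crossings}: by the thrackle bound, $\maxcr(T)$ equals $\binom{n-1}2-\sum_{w\in V(T)}\binom{d(w)}2$ minus the minimum, over all legal rectilinear drawings, of the number of nontrivial missed crossings, so the theorem is equivalent to showing that this minimum is exactly $d$. Writing the children of $v$ as $u_1,\dots,u_k$ and calling the subtree on $v$, $u_i$, and the leaves below $u_i$ the $i$-th \emph{leg}, the first observation is that every pair of edges within a single leg shares $u_i$ (or $v$) and is therefore adjacent; hence every nontrivial missed crossing occurs between two distinct legs, and each such missed crossing is charged to exactly one pair of legs. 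Consequently, if $M_{ij}$ denotes the number of missed crossings between legs $i$ and $j$, the total number of missed crossings equals $\sum_{i<j}M_{ij}$ with no over- or under-counting, and it suffices to bound this sum from below (for the upper bound on $\maxcr$) and to realize it exactly in one drawing (for the lower bound).

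For the upper bound I would fix a drawing and build an auxiliary graph $\mathcal{T}$ on $[k]$ with vertex weights $W(i)=c_i$, putting $ij\in E(\mathcal{T})$ precisely when legs $i$ and $j$ \emph{fully cross}, i.e. every nonadjacent pair consisting of an edge of leg $i$ and an edge of leg $j$ actually crosses. The crucial point is that $\mathcal{T}$ is triangle-free: if $i,j,l$ were a triangle, choose one foot from each of these three legs and apply Lemma~\ref{Lemma: three crossing edges} to the spider subgraph on $v,u_i,u_j,u_l$ and these feet (in which each $u_\alpha$ has degree $2$); the three feet pairwise cross, so the lemma produces a missed crossing between a foot of one leg and the root edge $vu_\beta$ of another, contradicting that legs $\alpha,\beta$ fully cross. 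Applying Proposition~\ref{proposition minedgeweight} with $\l=3$ (weighted Mantel) then gives $w_{min}(\mathcal{T})\le \ex(k,w_{min},K_3)=w_{min}(B_3(v_1,\dots,v_k))$, so that
\[
\sum_{ij\notin E(\mathcal{T})}\min(c_i,c_j)\ \ge\ w_{min}(K_k)-w_{min}(B_3(v_1,\dots,v_k))\ =\ d .
\]
Since legs joined by an edge of $\mathcal{T}$ contribute $M_{ij}=0$, the upper bound reduces to the per-pair claim that any two legs that do \emph{not} fully cross already satisfy $M_{ij}\ge\min(c_i,c_j)$; granting this, $\sum_{i<j}M_{ij}=\sum_{ij\notin E(\mathcal{T})}M_{ij}\ge\sum_{ij\notin E(\mathcal{T})}\min(c_i,c_j)\ge d$.

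For the lower bound I would exhibit an explicit drawing attaining exactly $d$ missed crossings, guided by the extremal bipartition $B_3$. Placing all vertices in convex position in an order that interleaves the odd-indexed and even-indexed legs, I expect every cross-parity pair of legs to fully cross while each same-parity pair $i<j$ contributes exactly $\min(c_i,c_j)=c_j$ missed crossings, one per foot of the smaller leg against the larger leg. Because $\sum_{\text{same parity }i<j}c_j=d$, such a drawing matches the upper bound; verifying the claimed behavior is a direct check using the alternation rule for crossings in convex position, the only subtlety being to confirm that the count is \emph{exactly} $c_j$ per same-parity pair rather than merely at most $c_j$.

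The main obstacle is the per-pair claim: that two legs either fully cross or are forced to realize at least $\min(c_i,c_j)$ missed crossings, with no intermediate possibility on the low end. I would attack it by a region/parity analysis in the spirit of the proof of Lemma~\ref{Lemma: three crossing edges}: fixing the relative drawing of the two stars rooted at $u_i$ and $u_j$, partition the plane by the lines carrying the relevant edges and, reading off the $c_j$ feet of the smaller leg (all emanating from $u_j$) against the root edge $vu_i$ and the feet of the larger leg, match each foot that fails to cross some edge of leg $i$ to a distinct missed crossing, so that the failure of full crossing propagates to at least $c_j$ of them. Ensuring that this matching is injective, and dovetails with the exactness needed in the construction so that the two bounds coincide, is where I expect the real work of the proof to lie.
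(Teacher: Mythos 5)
Your outline parallels the paper's proof closely: the reduction to counting nontrivial missed crossings, the auxiliary weighted graph on the legs, triangle-freeness via Lemma \ref{Lemma: three crossing edges} applied to one foot per leg, the weighted Mantel bound from Proposition \ref{proposition minedgeweight}, and an interleaved two-sided drawing attaining $d$ (the paper places vertices on two parallel lines rather than in convex position, but the interleaving and the count of $c_j$ misses per same-parity pair $i<j$ are the same). The problem is the step you yourself flag as the ``real work'': the per-pair claim that any two legs which fail to fully cross must realize at least $\min(c_i,c_j)$ missed crossings. That claim is \emph{false} as a geometric statement about drawings, so no region/parity analysis of two legs can prove it. Counterexample: draw two legs so that they fully cross, then move one grandchild vertex $a$ toward $u_i$ along the segment $u_ia$, stopping just after the crossing point farthest from $u_i$ slides off the edge. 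Only the edge $u_ia$ is altered, so exactly one nonadjacent pair between the two legs now fails to cross, giving $M_{ij}=1$ even when $\min(c_i,c_j)$ is arbitrarily large.

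The missing idea is the paper's \emph{cloning} reduction, which is where optimality of the drawing is actually used. Before any counting, fix the optimal drawing and, for each $i$, let $j^*$ maximize the number of crossings on $v_iv_{i,j^*}$; redraw every grandchild of $v_i$ in a tiny neighborhood of $v_{i,j^*}$, so that all edges $v_iv_{i,j}$ cross exactly the same set of edges. This cannot decrease the number of crossings, so it is a legitimate WLOG for the upper bound. After this reduction your per-pair claim becomes immediate: if any grandchild edge of leg $i$ misses an edge of leg $j$, then all $c_i$ of its clones miss that same edge (and symmetrically for leg $j$), so a pair that does not fully cross contributes at least $\min(c_i,c_j)$ missed crossings. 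In other words, the multiplicity $\min(c_i,c_j)$ is not forced by the geometry of two arbitrary legs --- it genuinely is not there --- but is manufactured by first rigidifying the optimal drawing. With the cloning step inserted before your auxiliary-graph argument, your proof goes through and essentially coincides with the paper's.
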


\begin{proof}
	First let us describe a drawing of $T$ that misses only $d$ nontrivial crossings. In the canonical $xy$-plane, Our vertices will appear either on the line $y=0$ or $y=1$, and every edge will be incident with vertices on both lines. Let $v$ be the root of $T$ and $v_1,\dots,v_k$ be the children of $v$. For each $1\leq i\leq k$, let $v_{i,1},\dots,v_{i,c_i}$ be the $c_i$ children of $v_i$.
	
	We define the following useful tool for drawings of trees. Suppose $uv$ and $wv$ are adjacent edges in $T$. We say that $uv$ is a {\em clone} of $wv$ in a particular drawing of $T$ if $uv$ and $wv$ cross precisely the same set of edges. Note that if we are given a drawing of $T$, we can alter the drawing to make $uv$ a clone of $wv$ simply by embedding $u$ very close to $w$. 
	
	Let $v$ be embedded at $(0,1)$. Now embed $v_i$ at $(x_i, 0)$ for some numbers (say integers) $x_i$ such that {\em (i)} $x_i < x_j$ when $i$ is odd and $j$ is even, {\em (ii)} $x_i < x_j$ when $i, j$ are even and $i < j$, and {\em (iii)} $x_i > x_j$ when $i, j$ are odd and $i < j$. In other words, $x_1 < x_3 < x_5 \ldots < x_6 < x_4 < x_2$. Finally, we will embed the grandchildren $v_{i, j}$ on the line $y=1$. Specifically, for each odd $i$ we will choose a positive integer $x_i'$ and for each even $i$ we will choose a negative integer $x_i'$, such that $x_2' < x_4' < \ldots < 0$ and $x_1' > x_3' \ldots > 0$. We now embed all of the grandchildren $v_{i, j}$ very close to the point $(x_i', 1)$, so that they are all clones of each other. For an example, see Figure \ref{figure diameter 4 drawing}.

	Now, note that in this drawing of $T$, each nontrivial missed crossing occurs between an edge $v v_i$ and an edge $v_{i'}v_{i',j'}$ for some $i,i',j'$ with $i$ and $i'$ of the same parity, and $i<i'$. Furthermore, whenever there exists such a missed crossing, then $v v_i$ misses every edge $v_{i'}v_j$ for all $1\leq j\leq c_{i'}$. Notice that for $v_{2i+1}$, with $i\geq 1$, there are exactly $i$ vertices $v_j$ with $j$ odd and $j<2i+1$, so the edges from $v_{2i+1}$ to its grandchildren account for $ic_{2i+1}$ missed crossings. Similarly, each vertex $v_{2i+2}$, with $i\geq 1$, accounts for $ic_{2i+2}$ missed crossings. This gives us exactly $d$ nontrivial missed crossings, so $\maxcr(T)\geq \binom{n-1}{2}-\sum_{v\in V(T)}\binom{d(v)}2-d$.
	
	We now will focus on the upper bound for $\maxcr(T)$. Let $\mathcal{D}$ be a drawing of $T$ in the plane with $\maxcr(T)$ crossings. We may assume that for each $i$, all the edges $v_i v_{i, j}$ are clones of each other. To see this, fix $i$ and let $j^*$ be the index such that $v_iv_{i,j^*}$ crosses the largest number of other edges. Then we may alter the drawing of $T$ to make all edges $v_iv_{i,j}$ into clones of $v_iv_{i,j^*}$ which can only increase the number of crossings.
	
	Let us consider an auxiliary weighted graph $H$ on vertex set $V(H)=\{v_1,\dots,v_k\}$ with vertex weights $w(v_i)=c_i$ and edge weights given by $w_{min}$. The edge $v_iv_j$ will be included in $H$ if and only if in the drawing of $T$ the edge $v_iv_{i,1}$ crosses both edges $vv_j$ and $v_jv_{j,1}$, and the edge $vv_i$ crosses $v_jv_{j,1}$. If we apply Lemma \ref{Lemma: three crossing edges} to the edges $v_iv_{i,1}$, $v_{i'}v_{i',1}$ and $v_{i''}v_{i'',1}$ ($S$ will be the $3$-legged spider with these edges as feet), we see that the vertices in \{$v_i$, $v_{i'}$, $v_{i''}$\} do not induce a triangle in $H$. Thus $H$ is triangle-free. Note that any non-edge, say $v_iv_{i'}$ in $H$, corresponds to a missed crossing involving either $v_iv_{i,1}$ or $v_{i'}v_{i',1}$, say $v_iv_{i,1}$. Since in our drawing, $v_{i,j}$ misses every edge that $v_{i,1}$ does for each $1\leq j\leq c_i$, this non-edge in $H$ is in one-to-one correspondence with $c_i$ missed crossings in our drawing that are not counted by the thrackle bound. Thus, every non-edge $v_iv_{i'}$ contributes at least $w_{min}(v_iv_{i'})$ missed crossings, and so we have at least $d=w_{min}(K_k)-\ex(k,w_{min},K_3)$ nontrivial missed crossings, finishing the proof.
\end{proof}

\section{Conclusion and open problems}

For the Tur\'an problem in a multipartite host graph, of course the next question to ask is the value of $\ex(K_{k_1, k_2, \ldots, k_r}, s K_\l)$ for $s \ge 2$. A naive approach using the vertex-duplication operation fails. Indeed it is possible that $G$ is $s K_\l$-free, but duplicating a vertex makes a new graph that is not. 

It may also be of interest to find an efficient algorithm that, given the parameters $\l, k_1, \ldots k_r$, determines the value of the maximum in Theorem \ref{thm:turan}. The authors have not made a serious attempt to find such an algorithm, but it is possible that none exists. Indeed, this problem is similar to the well-studied load-balancing problem which is known to be NP-hard. 

For rectilinear crossing numbers, the next natural question to ask is for trees of larger diameter. However, even for general diameter 5 trees the problem seems to get significantly more complicated, although some of our arguments do still apply.

It may also be of some interest to study more vertex-induced edge weightings. The next most natural problem along these lines may be the sum-edge-weighting $w_+$ given by $w_+(xy)=W(x)+W(y)$. 

\bibliographystyle{amsplain}
\bibliography{refs}

\end{document}